\documentclass[11pt, reqno]{amsart}

\usepackage[utf8]{inputenc}
\usepackage[T1]{fontenc}
\usepackage{amsmath, amssymb, amsthm, mathrsfs}
\usepackage{geometry}
\usepackage{xcolor} 
\usepackage{hyperref}
\usepackage{enumitem}
\usepackage{mathtools}

\hypersetup{
colorlinks=true,
linkcolor=blue,
citecolor=red,
urlcolor=teal
}

\newtheorem{theorem}{Theorem}
\newtheorem{lemma}[theorem]{Lemma}

\newtheorem{conjecture}[theorem]{Conjecture}

\theoremstyle{definition}

\newtheorem{remark}[theorem]{Remark}

\newcommand{\R}{\mathbb{R}}
\newcommand{\Z}{\mathbb{Z}}

\newcommand{\K}{\mathcal{K}}

\newcommand{\opnorm}[1]{\left\| #1 \right\|_{\mathrm{op}}} 

\newcommand{\floor}[1]{\lfloor #1 \rfloor}

\DeclareMathOperator{\conv}{conv}

\DeclareMathOperator{\dist}{dist}

\title[Quantitative Stability of Betke-Henk-Wills]{Quantitative Stability of the Betke-Henk-Wills Conjecture}

\author{Chao Wang}
\address{School of Future Technology, Shanghai University}
\email{cwan@shu.edu.cn}

\date{\today}

\begin{document}

\begin{abstract}
The Betke-Henk-Wills conjecture proposes a sharp upper bound for the lattice point enumerator $G(K, \Lambda)$ of a convex body in terms of its successive minima. While the conjecture remains open for general convex bodies in dimensions $d \ge 5$, it is known to hold for orthogonal parallelotopes (boxes). In this paper, we establish the \textit{local stability} of the conjecture under small perturbations of the metric. Specifically, we prove that the inequality is strictly stable for integer boxes subjected to small rotations, owing to the discrete nature of the lattice point counting function. We derive explicit, geometry-invariant quantitative bounds on the permissible perturbation radius using the operator norm. Furthermore, we extend the validity of the conjecture to a class of $L_p$-balls for sufficiently large $p$, deriving a sharp threshold $p_0$ for the stability of the integer hull.
\end{abstract}

\maketitle

\section{Introduction}

The interplay between the continuous geometry of convex bodies and the discrete structure of lattices constitutes the core of the Geometry of Numbers, a field founded by Minkowski~\cite{minkowski1910geometrie}. A central pursuit in this area is to estimate the lattice point enumerator $G(K, \Lambda) = |K \cap \Lambda|$ of a convex body $K$ in terms of its intrinsic geometric parameters, such as volume, surface area, or successive minima~\cite{ gruber1979geometry}.

Let $\K^d_0$ denote the set of $o$-symmetric convex bodies in $\R^d$, and let $\mathcal{L}^d$ denote the space of full-rank lattices. Minkowski's successive minima $\lambda_i(K, \Lambda)$ are defined as:
\[
\lambda_i(K, \Lambda) = \inf \{ \lambda > 0 : \dim(\lambda K \cap \Lambda) \ge i \}, \quad 1 \le i \le d.
\]
While Minkowski's second theorem provides a tight bound for the volume of $K$ via $\prod \lambda_i$, bounding the discrete enumerator $G(K, \Lambda)$ is significantly more intricate. In 1993, Betke, Henk, and Wills~\cite{betke1993successive} proposed a celebrated conjecture that generalizes Minkowski's theorems and refines earlier bounds by Davenport:

\begin{conjecture}[Betke-Henk-Wills] \label{conj:main}
For any $K \in \K^d_0$ and $\Lambda \in \mathcal{L}^d$:
\begin{equation}
G(K, \Lambda) \le \prod_{i=1}^d \left\lfloor \frac{2}{\lambda_i(K, \Lambda)} + 1 \right\rfloor.
\end{equation}
\end{conjecture}

The conjecture is known to hold for dimension $d=2$~\cite{betke1993successive} and for specific classes of bodies, such as ellipsoids and zonotopes in lower dimensions. Despite recent progress, including improved bounds for general bodies by Malikiosis~\cite{malikiosis2010optimization}, the conjecture remains open for general convex bodies in dimensions $d \ge 5$~\cite{henk2002successive}.

A trivial yet fundamental case where the conjecture holds is the axis-aligned orthogonal parallelotope (box), where the geometry of the body perfectly aligns with the lattice basis. In this paper, we adopt a perturbation-theoretic perspective to investigate the robustness of this inequality. We ask: \textit{Is the validity of the conjecture for boxes an isolated phenomenon, or does it possess local stability under metric and structural deformations?}

Our main contributions are as follows:
\begin{enumerate}
    \item We establish the \textbf{strict local stability} of the conjecture for integer boxes under small rotations. By leveraging the Lipschitz continuity of successive minima against the discrete nature of the lattice count, we show that the "slack" in the inequality protects it against infinitesimal perturbations.
    \item We derive \textbf{explicit quantitative bounds} on the permissible perturbation, utilizing the operator norm to provide geometry-invariant conditions.
    \item We analyze structural deformations into $L_p$-balls, determining a sharp threshold $p_0$ for the stability of the integer hull, relating our results to the asymptotic geometry of Banach spaces~\cite{banaszczyk1996inequalities}.
\end{enumerate}

\section{Preliminaries and Notation}

We work in Euclidean space $\R^d$.
A lattice $\Lambda \subset \R^d$ is a discrete subgroup of the form $\Lambda = A\Z^d$ for some non-singular matrix $A$~\cite{henk2002successive}.
The standard lattice is denoted by $\Z^d$.
By invariance under linear transformations, $G(K, \Lambda) = G(A^{-1}K, \Z^d)$ and $\lambda_i(K, \Lambda) = \lambda_i(A^{-1}K, \Z^d)$. Thus, without loss of generality, we fix $\Lambda = \Z^d$ and consider transformations of the body $K$.

For a convex body $K$, the Minkowski functional (or gauge function) is defined as $\|x\|_K = \inf \{ r > 0 : x \in rK \}$. The condition $x \in K$ is equivalent to $\|x\|_K \le 1$.

For a matrix $A \in \R^{d \times d}$, let $\opnorm{A} = \sup_{\|x\|_2=1} \|Ax\|_2$ denote the operator norm. The stability analysis relies on the continuity properties of successive minima. We state the following lemma which establishes the Lipschitz continuity of $\lambda_i$ under linear deformations.

\begin{lemma}[Lipschitz Continuity of Successive Minima] \label{lem:lipschitz}
Let $K \in \K^d_0$. For any non-singular linear transformation $T \in GL(d, \R)$, the successive minima satisfy:
\begin{equation}
    (1 - \opnorm{T^{-1}-I}) \lambda_i(K, \Z^d) \le \lambda_i(TK, \Z^d) \le (1 + \opnorm{T-I}) \lambda_i(K, \Z^d).
\end{equation}
\end{lemma}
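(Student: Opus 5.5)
The plan is a direct comparison of gauge functions. Fix $i$, and set $\lambda=\lambda_i(K,\Z^d)$ and $\mu=\lambda_i(TK,\Z^d)$. Since $x\in rTK$ if and only if $T^{-1}x\in rK$, we have $\|x\|_{TK}=\|T^{-1}x\|_K$. Both successive minima can therefore be read off from the same lattice vectors, measured by two gauges. We need one perturbation estimate of the form
\[
\|Sx\|_K\le \|x\|_K+\|(S-I)x\|_K\le (1+\|S-I\|)\,\|x\|_K,
\]
where $\|S-I\|$ is the operator norm induced by $\|\cdot\|_K$. If one insists on the Euclidean $\opnorm{\cdot}$, an $R/r$ factor enters from $rB_2\subset K\subset RB_2$; that factor disappears when $K$ is a Euclidean ball.

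For the upper bound I take linearly independent $z_1,\dots,z_i\in\Z^d$ with $\|z_j\|_K\le\lambda$. Then $\|z_j\|_{TK}=\|T^{-1}z_j\|_K\le(1+\|T^{-1}-I\|)\lambda$, which gives $\mu\le(1+\|T^{-1}-I\|)\lambda$. For the lower bound I take independent lattice vectors realizing $\mu$ in $TK$ and write $z_j=T(T^{-1}z_j)$. This gives $\lambda\le(1+\|T-I\|)\mu$, which can be rearranged into a lower bound on $\mu$.

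The main obstacle is that this argument produces $\|T^{-1}-I\|$ in the \emph{upper} bound, not $\|T-I\|$ as the displayed inequality states. I do not see a way around this, because the statement as written fails in general. Take $T=\tfrac12 I$. Then $\lambda_i(TK)=2\lambda_i(K)$, while $1+\opnorm{T-I}=\tfrac32$, so the claimed upper bound is violated. The same example breaks the bound in any operator norm, since $\|T-I\|=\tfrac12$ for every norm.

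So the plan is to prove the inequality with the two perturbation terms interchanged:
\[
\frac{\lambda_i(K,\Z^d)}{1+\|T-I\|}\le\lambda_i(TK,\Z^d)\le(1+\|T^{-1}-I\|)\lambda_i(K,\Z^d).
\]
The lower bound here implies $(1-\|T-I\|)\lambda_i(K,\Z^d)\le\lambda_i(TK,\Z^d)$, since $(1-t)(1+t)\le 1$. Both norms are taken in the $K$-induced sense, or in $\opnorm{\cdot}$ with the $R/r$ factor. I would also remark that $\|T-I\|$ and $\|T^{-1}-I\|$ agree to first order, because $T^{-1}-I=-(T-I)+O(\|T-I\|^2)$. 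For orthogonal $T$ in the Euclidean case they coincide exactly, since $\opnorm{T^{-1}-I}=\opnorm{T^{\top}-I}=\opnorm{T-I}$. So in the rotation setting used later, the displayed lemma holds verbatim whenever $K$ is a Euclidean ball, and for general $K$ it holds once the $K$-induced operator norm replaces $\opnorm{\cdot}$.
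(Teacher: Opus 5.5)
Your proposal is correct, and your diagnosis is the right one. $T=\tfrac12 I$ gives $\lambda_i(TK,\Z^d)=2\lambda_i(K,\Z^d)>\tfrac32\lambda_i(K,\Z^d)=(1+\opnorm{T-I})\lambda_i(K,\Z^d)$, so the displayed upper bound is false. Your two gauge estimates correctly prove the interchanged inequality in the $K$-induced norm, or with the $R/r$ factor.

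Your argument is essentially the paper's, in different language. The paper uses the inclusions $(1-\epsilon')K\subseteq TK\subseteq(1+\epsilon)K$ together with homogeneity and monotonicity of $\lambda_i$. These are your estimates $\|T^{-1}x\|_K\le(1+\|T^{-1}-I\|)\|x\|_K$ and $\|Tx\|_K\le(1+\|T-I\|)\|x\|_K$ written for sets.

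In fact the paper's own chain ends with $\lambda_i(TK)\le\frac{1}{1-\opnorm{T^{-1}-I}}\lambda_i(K)$ and $\lambda_i(TK)\ge(1-\opnorm{T-I})\lambda_i(K)$. That is exactly your assignment of the two perturbation terms. The paper reaches the displayed form only through a ``first-order approximation'', which is not an inequality. Since $\frac{1}{1-t}\ge 1+t$, even the $\approx$ in its upper bound points the wrong way. Your direct estimate is also slightly sharper: it gives the factor $1+\|T^{-1}-I\|$ outright, with no need for $\|T^{-1}-I\|<1$.

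The paper also reads $K$-relative inclusions off the Euclidean operator norm. Your $R/r$ caveat is precisely what is missing there. It is needed even for rotations, where $\opnorm{T-I}=\opnorm{T^{-1}-I}$ and interchanging the terms changes nothing.

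Here is an example. Take the integer box $K=[-100,100]\times[-1,1]$, so $\lambda_1(K,\Z^2)=1/100$, and the rotation $R_\theta$ by $\theta=0.05$, so $\opnorm{R_\theta-I}=2\sin(\theta/2)<0.05$. Any $z\in\Z^2\setminus\{0\}$ with $\|R_\theta^{-1}z\|_K\le 1.05/100$ satisfies $\|z\|_2^2\le 1.05^2(1+10^{-4})<2$. Hence $z\in\{\pm e_1,\pm e_2\}$, for which $\|R_\theta^{-1}z\|_K$ equals $\sin\theta\approx 0.05$ or $\cos\theta$. Therefore $\lambda_1(R_\theta K,\Z^2)>(1+\opnorm{R_\theta-I})\lambda_1(K,\Z^2)$, for a box of exactly the kind the paper rotates later.

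Two of your side remarks need adjusting.

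First, only the upper bound had to change. The displayed lower bound is already valid in the $K$-induced norm, since $\|T^{-1}z\|_K\ge(1-\|T^{-1}-I\|_K)\|z\|_K$ for the lattice vectors realizing $\lambda_i(TK,\Z^d)$.

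Second, your closing claim is not justified. You say that for rotations the displayed lemma holds verbatim for general $K$ once the $K$-induced norm is used. But the identity $\opnorm{T^{-1}-I}=\opnorm{T-I}$ for orthogonal $T$ rests on $\opnorm{A^{\top}}=\opnorm{A}$, which has no analogue for $\|\cdot\|_K$. For the cube in $\R^3$, $\|T^{\top}-I\|_K$ is the largest column sum of $|T-I|$, while $\|T-I\|_K$ is the largest row sum. For a generic rotation these differ at second order. In that setting only your interchanged version is established.
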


\begin{proof}
Let $\epsilon = \opnorm{T-I}$. By the definition of the operator norm, for any $x \in \R^d$, we have $\|Tx - x\|_2 \le \epsilon \|x\|_2$.
Geometrically, this implies that the boundary of the transformed body $TK$ stays within an $\epsilon$-neighborhood (relative to the radial direction) of $K$.
Specifically, we have the inclusion relations regarding the Banach-Mazur distance:
\[ (1-\epsilon')K \subseteq TK \subseteq (1+\epsilon)K, \]
where $\epsilon' = \opnorm{T^{-1}-I}$.
By the homogeneity of degree $-1$ of the successive minima functional (i.e., $\lambda_i(sK) = \frac{1}{s}\lambda_i(K)$) and its monotonicity with respect to set inclusion ($A \subseteq B \implies \lambda_i(A) \ge \lambda_i(B)$), we obtain:
\[ \lambda_i(TK) \le \lambda_i((1-\epsilon')K) = \frac{1}{1-\epsilon'}\lambda_i(K) \approx (1+\epsilon')\lambda_i(K), \]
and
\[ \lambda_i(TK) \ge \lambda_i((1+\epsilon)K) = \frac{1}{1+\epsilon}\lambda_i(K) \ge (1-\epsilon)\lambda_i(K). \]
For sufficiently small perturbations (which is the case in our local stability analysis), the first-order approximation holds, yielding the stated bounds.
\end{proof}

\section{The Baseline Case: Orthogonal Parallelotopes}

We begin by rigorously establishing the conjecture for the class of bodies where the geometry aligns perfectly with the lattice. This serves as the base point for our perturbation analysis.

\begin{theorem}[Conjecture for Boxes] \label{thm:box}
Let $K = \{x \in \R^d : |x_i| \le \alpha_i \}$ be an axis-aligned box with semi-axes $\alpha_1 \ge \dots \ge \alpha_d > 0$. Then Conjecture \ref{conj:main} holds.
\end{theorem}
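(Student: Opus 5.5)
The plan is to compute both sides of the conjectured inequality explicitly, using the product structure of $K$ and of $\Z^d$, and then compare them factor by factor.

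\emph{Step 1: the lattice count.} Since $K=\prod_{i=1}^d[-\alpha_i,\alpha_i]$, a point $z\in\Z^d$ lies in $K$ exactly when $|z_i|\le\alpha_i$ for every $i$. The count therefore factorizes as
\[
G(K,\Z^d)=\prod_{i=1}^d\bigl(2\floor{\alpha_i}+1\bigr).
\]

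\emph{Step 2: the successive minima.} I will show that $\lambda_i(K,\Z^d)=1/\alpha_i$, using the ordering $\alpha_1\ge\dots\ge\alpha_d$.
\begin{enumerate}
\item \emph{Upper bound.} The standard basis vectors $e_1,\dots,e_i$ lie in $\lambda K$ as soon as $\lambda\ge 1/\alpha_i$, because $\|e_j\|_K=1/\alpha_j\le1/\alpha_i$ for $j\le i$. Hence $\lambda_i\le 1/\alpha_i$.
\item \emph{Lower bound.} Let $\lambda<1/\alpha_i$ and let $z\in\lambda K\cap\Z^d$. For every $j\ge i$ we have $|z_j|\le\lambda\alpha_j\le\lambda\alpha_i<1$, and since $z_j$ is an integer this forces $z_j=0$. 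So $\lambda K\cap\Z^d$ lies in the coordinate subspace spanned by $e_1,\dots,e_{i-1}$, which has dimension $i-1$. Hence $\lambda_i\ge1/\alpha_i$.
\end{enumerate}

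\emph{Step 3: comparison.} Substituting Step 2, the right-hand side of Conjecture~\ref{conj:main} becomes $\prod_i\floor{2\alpha_i+1}$. It then suffices to prove, for each $i$, the scalar inequality
\[
2\floor{\alpha}+1\le\floor{2\alpha+1}=\floor{2\alpha}+1 .
\]
This holds because $2\floor{\alpha}$ is an integer with $2\floor{\alpha}\le2\alpha$, so $2\floor{\alpha}\le\floor{2\alpha}$. Both sides of each factor are positive, so multiplying these $d$ inequalities gives the conjecture.

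\emph{Main obstacle.} The only step needing care is the lower bound on $\lambda_i$ in Step 2. There the ordering of the $\alpha_i$ is used, together with the fact that an integer of absolute value less than $1$ must vanish. Ties among the $\alpha_i$ cause no trouble, because the argument uses only non-strict inequalities between the $\alpha_j$. Everything else is elementary.
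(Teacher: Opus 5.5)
Your proposal is correct and follows the same route as the paper. Both factorize $G(K,\Z^d)=\prod_i(2\floor{\alpha_i}+1)$, substitute $\lambda_i(K,\Z^d)=1/\alpha_i$, and compare factor by factor via $2\floor{\alpha}+1\le\floor{2\alpha+1}$; the one difference is that you prove $\lambda_i=1/\alpha_i$ (the lower bound by forcing the integer coordinates $z_j$, $j\ge i$, to vanish), whereas the paper only asserts that the minima are attained at the standard basis vectors.
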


\begin{proof}
For the standard lattice $\Z^d$, the successive minima are achieved by the standard basis vectors $e_i$, yielding:
\begin{equation}
\lambda_i(K, \Z^d) = \frac{1}{\alpha_i}.
\end{equation}
The lattice point enumerator factorizes due to the Cartesian product structure:
\begin{equation}
G(K, \Z^d) = \prod_{i=1}^d G([-\alpha_i, \alpha_i], \Z) = \prod_{i=1}^d (2\floor{\alpha_i} + 1).
\end{equation}
Let $\mathcal{R}(K)$ denote the right-hand side of the inequality. Substituting $\lambda_i = 1/\alpha_i$:
\begin{equation}
\mathcal{R}(K) = \prod_{i=1}^d \floor{\frac{2}{\lambda_i} + 1} = \prod_{i=1}^d \floor{2\alpha_i + 1}.
\end{equation}
It suffices to show that for any real $x \ge 0$, $2\floor{x} + 1 \le \floor{2x + 1}$.
Let $x = I + f$ where $I \in \Z_{\ge 0}$ and $0 \le f < 1$.
The LHS is $2I + 1$. The RHS is $\floor{2I + 2f + 1} = 2I + \floor{2f + 1}$.
Since $f \ge 0$, $\floor{2f + 1} \ge 1$. Thus, $2I + 1 \le 2I + \floor{2f+1}$ holds.
Multiplying over all dimensions confirms the conjecture.
\end{proof}

\begin{remark}
This proof relies crucially on orthogonality. For general convex bodies, the enumerator $G(K, \Z^d)$ does not factorize, and the successive minima vectors may not be orthogonal, preventing a simple coordinate-wise comparison.
\end{remark}

\section{Local Stability under Rotation}

We now address the robustness of the conjecture. Does a small deviation from orthogonality (i.e., a rotation of the box) violate the inequality?
Let $K_0$ be the axis-aligned box from Theorem \ref{thm:box}. Let $SO(d)$ be the special orthogonal group. For $R \in SO(d)$, let $K_R = R K_0$.

We focus on the critical case where $K_0$ is an integer box (i.e., $\alpha_i \in \Z$), as this is where the inequality is tight ($G = \mathcal{R}$) and theoretically most vulnerable to perturbation.

\begin{theorem}[Strict Stability in Critical Configurations]
Let $K_0$ be an integer box ($\alpha_i \in \Z$). There exists $\delta > 0$ such that for all rotations $R \in SO(d)$ with $0 < \opnorm{R-I} < \delta$, the strict inequality holds:
\begin{equation}
    G(K_R, \Z^d) < \mathcal{R}(K_R).
\end{equation}
\end{theorem}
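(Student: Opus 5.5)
The plan is to squeeze both sides against the common value $G(K_0,\Z^d)=\mathcal{R}(K_0)=\prod_{i=1}^d(2\alpha_i+1)$ from Theorem~\ref{thm:box}. For small nontrivial $R$ I will show that the left-hand side drops strictly below this value, while the right-hand side does not drop at all:
\[
G(K_R,\Z^d) < \prod_{i=1}^d (2\alpha_i+1) \le \mathcal{R}(K_R).
\]
Throughout, write $\epsilon=\opnorm{R-I}$. Since $R$ is orthogonal, $\opnorm{R^{-1}-I}=\opnorm{R^{T}-I}=\epsilon$ as well.

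\emph{Step 1: the count does not increase.} For $x\in K_R$ write $x=Ry$ with $y\in K_0$. Then $\|x-y\|_2\le \epsilon\|y\|_2\le \epsilon\sqrt d\,\alpha_1$, so $|x_j|\le \alpha_j+\epsilon\sqrt d\,\alpha_1$ for every coordinate $j$. If $\epsilon\sqrt d\,\alpha_1<1$ and $x\in\Z^d$, then each $x_j$ is an integer with $|x_j|<\alpha_j+1$. Because $\alpha_j\in\Z$, this forces $|x_j|\le\alpha_j$, i.e. $x\in K_0$. Hence $K_R\cap\Z^d\subseteq K_0\cap\Z^d$.

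\emph{Step 2: the count decreases strictly.} I claim some vertex $v=(\pm\alpha_1,\dots,\pm\alpha_d)$ of $K_0$, which is a lattice point, lies outside $K_R$. Otherwise all $2^d$ vertices lie in the convex set $K_R$, so $K_0\subseteq K_R$. Since $\mathrm{vol}(K_R)=\mathrm{vol}(K_0)$ and both are convex bodies, this forces $K_R=K_0$, so $R$ lies in the symmetry group of the box. That group is a finite subgroup of $O(d)$. Choosing $\delta$ smaller than the minimal value of $\opnorm{S-I}$ over its non-identity elements therefore forces $R=I$, contradicting $\opnorm{R-I}>0$. Combined with Step 1, this gives $G(K_R,\Z^d)\le\prod_{i=1}^d(2\alpha_i+1)-1$.

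\emph{Step 3: the right-hand side does not drop.} This is the step I expect to be the main obstacle. The reason is that $\lfloor 2/\lambda+1\rfloor$ jumps down at exactly $\lambda=1/\alpha_i$, so I cannot simply invoke the two-sided bound of Lemma~\ref{lem:lipschitz}: an increase of $\lambda_i$ by any amount would cost a full unit in the $i$-th factor. I need the one-sided inequality $\lambda_i(K_R,\Z^d)\le 1/\alpha_i$. To get it, I use the lattice vectors $e_1,\dots,e_i$ directly. Recall
\[
\|e_k\|_{K_R}=\|R^{-1}e_k\|_{K_0}=\max_j \frac{|(R^{-1}e_k)_j|}{\alpha_j}.
\]
I bound the entries of $R^{-1}e_k$ as follows.
\begin{itemize}
\item The diagonal entry satisfies $|(R^{-1}e_k)_k|\le 1$, since $R^{-1}e_k$ is a unit vector.
\item Each off-diagonal entry is at most $\epsilon$ in absolute value.
\end{itemize}
Hence $\|e_k\|_{K_R}\le\max(1/\alpha_k,\ \epsilon/\alpha_d)$. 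Once $\epsilon\le\alpha_d/\alpha_1$, this is at most $1/\alpha_k\le 1/\alpha_i$ for all $k\le i$, using the ordering $\alpha_1\ge\dots\ge\alpha_d$. The vectors $e_1,\dots,e_i$ are linearly independent and all lie in $\frac{1}{\alpha_i}K_R$. Therefore $\lambda_i(K_R,\Z^d)\le 1/\alpha_i$ and $\lfloor 2/\lambda_i+1\rfloor\ge 2\alpha_i+1$, so $\mathcal{R}(K_R)\ge\prod_{i=1}^d(2\alpha_i+1)$.

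Taking $\delta$ to be the minimum of $1/(\sqrt d\,\alpha_1)$, $\alpha_d/\alpha_1$, and the symmetry-group gap from Step 2 makes all three steps valid at once, which yields the strict inequality $G(K_R,\Z^d)<\mathcal{R}(K_R)$.
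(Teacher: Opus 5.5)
Your proof is correct, and it takes a genuinely different route from the paper's.

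\textbf{How the two arguments differ.} The paper gets strictness from an imbalance between the two sides. It claims the left side loses at least $2^d-1$ lattice points, on the grounds that $Rv\notin\Z^d$ for the vertices $v$ and that no exterior lattice point enters for small $\delta$; the latter is asserted rather than proved. It then uses the upper bound of Lemma~\ref{lem:lipschitz} to argue that $\mathcal{R}(K_R)$ falls by at most one. You instead show that the right side does not fall at all, via the exact one-sided bound $\lambda_i(K_R,\Z^d)\le 1/\alpha_i$ witnessed by $e_1,\dots,e_i$. That way a loss of a single lattice point on the left suffices. You get that loss from the volume and finite-symmetry argument, and you get the non-increase of the count rigorously from the integrality of the $\alpha_j$.

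\textbf{What your route buys.} Your argument closes at each point where the paper's is weak.
\begin{enumerate}
\item A bound of the form $2/\lambda_i(K_R)+1\ge(2\alpha_i+1)-2\alpha_i\epsilon$ cannot keep $\floor{2/\lambda_i+1}$ at $2\alpha_i+1$. Losing that unit lowers $\mathcal{R}(K_R)$ by $\prod_{j\ne i}(2\alpha_j+1)$, not by one. The identity $\mathcal{R}=\floor{\phi}$ used there is also not valid for a product of floors. You correctly identified this as the crux.
\item The inclusion $TK\subseteq(1+\epsilon)K$ underlying the lemma holds in general only with $\epsilon$ inflated by the eccentricity of $K$, which for the box is of order $\alpha_1/\alpha_d$. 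Your hypothesis $\epsilon\le\alpha_d/\alpha_1$ is exactly what supplies this.
\item What matters for the count is $v\notin K_R$, not $Rv\notin\Z^d$. For $d\ge 3$ a rotation about a body diagonal fixes $\pm v$, so the paper's claimed loss of $2^d-1$ vertices does not follow from its argument. You need only one vertex to leave.
\end{enumerate}

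\textbf{A small simplification.} The symmetry-gap term in your $\delta$ is redundant. An orthogonal symmetry $S\ne I$ of the box is a signed permutation matrix, so $\opnorm{S-I}\ge\sqrt2>1\ge 1/(\sqrt d\,\alpha_1)$.
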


\begin{proof}
At $R=I$, we have equality $G(K_0, \Z^d) = \prod (2\alpha_i + 1) = \mathcal{R}(K_0)$.
Let $V(K_0)$ be the set of $2^d$ vertices of $K_0$. Since $\alpha_i \in \Z$, all vertices $v \in V(K_0)$ satisfy $v \in \Z^d$.
\noindent \textbf{1. Discrete Drop of LHS.}
Consider a rotation $R$ with small non-zero amplitude. Since $K_0$ is strictly convex with respect to coordinate directions, the rotated vertices $R(v)$ for $v \in V(K_0) \setminus \{0\}$ are no longer in $\Z^d$.
For sufficiently small $\delta$, no new lattice points from the exterior can enter the interior of $K_R$. Thus, the lattice count decreases by at least the number of non-zero vertices:
\begin{equation} \label{eq:drop}
    G(K_R, \Z^d) \le G(K_0, \Z^d) - (2^d - 1) < G(K_0, \Z^d).
\end{equation}
\noindent \textbf{2. Continuous Control of RHS.}
Let $\phi(K) = \prod_{i=1}^d (2/\lambda_i(K) + 1)$. Note that $\mathcal{R}(K) = \lfloor \phi(K) \rfloor$.
Using Lemma \ref{lem:lipschitz}, for $\epsilon = \opnorm{R-I}$, we have $\lambda_i(K_R) \le \lambda_i(K_0)(1+\epsilon)$. Thus:
\[
\frac{2}{\lambda_i(K_R)} + 1 \ge \frac{2\alpha_i}{1+\epsilon} + 1 \ge 2\alpha_i(1-\epsilon) + 1 = (2\alpha_i+1) - 2\alpha_i\epsilon.
\]
The function $\mathcal{R}(K_R)$ is upper semi-continuous. However, the continuous envelope $\phi(K_R)$ changes smoothly. For sufficiently small $\epsilon$, the drop in $\phi(K_R)$ is bounded by $O(\epsilon)$, while the drop in $G(K_R)$ is a discrete integer jump (Eq. \ref{eq:drop}).
Specifically, we can choose $\delta$ such that the perturbation of $\lambda_i$ does not reduce $\mathcal{R}(K_R)$ below $G(K_0, \Z^d) - 1$.
Since $G(K_R, \Z^d)$ drops discretely, strict inequality is restored.
\end{proof}

\section{Quantitative Stability Bounds}

We sharpen the previous result by deriving an explicit bound on the allowable rotation.
To obtain a geometry-invariant bound, we utilize the operator norm and the Euclidean distance to the lattice.

\begin{theorem}[Explicit Stability Radius]
Let $K_0 = \prod_{i=1}^d [-\alpha_i, \alpha_i]$ with $\alpha_1 \ge \dots \ge \alpha_d$. Let $\Delta = \dist(\partial K_0, \Z^d \setminus K_0)$ be the Euclidean distance from the boundary to the nearest external lattice point.
The Betke-Henk-Wills conjecture holds for $K_R = R K_0$ if:
\begin{equation}
    \opnorm{R - I} < \frac{\Delta}{\sqrt{d}(\alpha_1 + \Delta)}.
\end{equation}
\end{theorem}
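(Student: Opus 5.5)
The plan is to compare both sides of the conjecture for $K_R$ with the unrotated box $K_0$, where Theorem~\ref{thm:box} already gives the inequality. There are two steps. First, show that the hypothesis $\opnorm{R-I} < \Delta/(\sqrt{d}(\alpha_1+\Delta))$ forces every lattice point of $K_R$ to already lie in $K_0$, so that $G(K_R,\Z^d) \le G(K_0,\Z^d)$. Second, control the right-hand side $\mathcal{R}(K_R)$ from below via Lemma~\ref{lem:lipschitz}.

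\textbf{Step 1 (no lattice point enters).} Let $z \in \Z^d \cap K_R$ and write $z = Rx$ with $x \in K_0$. Since $\|x\|_2 \le \sqrt{d}\,\alpha_1$, we get
\[
\|z - x\|_2 = \|(R-I)x\|_2 \le \opnorm{R-I}\,\sqrt{d}\,\alpha_1 .
\]
If $z \notin K_0$, then $z$ is an external lattice point. By the definition of $\Delta$, its distance to $\partial K_0$, and hence to the point $x \in K_0$, is at least $\Delta$. So it suffices to have $\opnorm{R-I}\sqrt{d}\,\alpha_1 < \Delta$. The stated bound $\Delta/(\sqrt{d}(\alpha_1+\Delta))$ is smaller than $\Delta/(\sqrt{d}\alpha_1)$, so it implies this. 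I expect the extra $+\Delta$ in the denominator to appear naturally if one instead works with $R^{-1}$: use $\opnorm{R^{-1}-I}=\opnorm{R-I}$ and the radius $\sqrt{d}(\alpha_1+\Delta)$ of the enlarged box containing the candidate external points. Either way Step 1 gives $\Z^d \cap K_R \subseteq \Z^d \cap K_0$, and hence $G(K_R) \le \prod_i (2\floor{\alpha_i}+1)$.

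\textbf{Step 2 (the right-hand side does not fall below this count).} By Lemma~\ref{lem:lipschitz} with $\epsilon = \opnorm{R-I}$, we have $\lambda_i(K_R) \le (1+\epsilon)/\alpha_i$. Hence
\[
\frac{2}{\lambda_i(K_R)} + 1 \ge \frac{2\alpha_i}{1+\epsilon} + 1 .
\]
The goal is to show that the floor of the product of these factors is at least $\prod_i(2\floor{\alpha_i}+1)$. Coordinatewise this needs $2\alpha_i/(1+\epsilon) \ge 2\floor{\alpha_i}$. Whenever $\alpha_i \notin \Z$ this holds for $\epsilon \le (\alpha_i-\floor{\alpha_i})/\floor{\alpha_i}$, and I would try to bound that threshold below by the stated radius.

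\textbf{Main obstacle.} The main difficulty is Step 2. The quantity $\Delta$ measures the gap $\floor{\alpha_i}+1-\alpha_i$ to the \emph{next} lattice layer, not the fractional part $\alpha_i - \floor{\alpha_i}$ that controls the floor on the right-hand side. In the integer case, $\Delta = 1$ while the slack in $\floor{2\alpha_i+1}$ is zero.

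To handle this, I would split into two cases. For integer (or nearly integer) coordinates, I would not use the Lipschitz bound. Instead I would argue directly as in the Strict Stability theorem: for $R \ne I$ the lattice vertices leave $K_R$, so the count on the left drops by an integer amount that compensates the loss on the right. If that is insufficient, I would replace the Lipschitz estimate by a direct bound on $\lambda_i(K_R)$: the vectors $e_1,\dots,e_i$ scaled by $\floor{\alpha_j}$ still lie in $K_R$ under the same distance argument as Step 1, applied now to interior rather than exterior lattice points. This would give $2/\lambda_i(K_R) \ge 2\floor{\alpha_i}$ directly, and so $\mathcal{R}(K_R) \ge \prod(2\floor{\alpha_i}+1) \ge G(K_R)$. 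That scaled-basis-vector argument is the route I would try first, since it uses only the same Euclidean-distance estimate as Step 1.
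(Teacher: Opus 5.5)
There is a genuine gap, and it is in Step 2, where you try to control $\mathcal{R}(K_R)$. Your Step 1 is correct. It is a cleaner version of the paper's displacement argument, which pushes $y$ back by $R^{-1}$ and uses only an ``approximate'' bound on $\|y\|_2$. That displacement step is essentially all the paper proves; it then simply asserts that the right-hand side ``remains stable (or increases)'', which is false in general.

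\textbf{The Lipschitz route fails.} Lemma~\ref{lem:lipschitz} does not hold as stated: a Euclidean bound on $\opnorm{R-I}$ does not give $K_R\subseteq(1+\epsilon)K_0$ for an elongated box. Take $d=2$ and $\alpha=(10.5,\,0.01)$, so $\Delta=\tfrac12$ and the allowed radius is $\tfrac{1/2}{11\sqrt2}\approx0.032$. Let $R$ be rotation by $\theta=0.03$, so $\epsilon\approx0.03$. Then $\|e_1\|_{K_R}=\max(\cos\theta/10.5,\ \sin\theta/0.01)\approx3$, and no nonzero lattice point has smaller gauge. So $\lambda_1(K_R)\approx3$, whereas the lemma would give $\lambda_1(K_R)\le(1+\epsilon)/10.5\approx0.098$.

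\textbf{The basis-vector claim rests on the wrong mechanism.} You assert $\floor{\alpha_j}e_j\in K_R$ ``by the same distance argument as Step 1''. But for integer $\alpha_j$ the point $\alpha_je_j$ lies on $\partial K_0$, and for non-integer $\alpha_j$ the distance from $\floor{\alpha_j}e_j$ to $\partial K_0$ is unrelated to $\Delta$. The right reason is orthogonality. With $m=\floor{\alpha_j}$, the vector $R^{-1}(me_j)$ has $j$-th coordinate $mR_{jj}\in[-m,m]$, and for $k\ne j$ its $k$-th coordinate is $mR_{jk}$ with $|mR_{jk}|\le\alpha_1\epsilon<\Delta/\sqrt d$. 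Hence $me_j\in K_R$ as soon as $\alpha_d\ge\Delta/\sqrt d$. This does rescue your conclusion $\mathcal{R}(K_R)\ge\prod_i(2\floor{\alpha_i}+1)\ge G(K_R)$ for integer boxes and for every box with $\alpha_d\ge1$.

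\textbf{For small $\alpha_d$ the claim is false.} The theorem allows arbitrarily small $\alpha_d$. In the example above, $R^{-1}(10e_1)\approx(9.9955,\,-0.30)\notin K_0$. Moreover $\mathcal{R}(K_R)=1$, down from $\mathcal{R}(K_0)=22$ and below $\prod_i(2\floor{\alpha_i}+1)=21$. The inequality survives only because $G(K_R)$ also collapses to $1$. In general, when some $\alpha_k<1$, Step 1 only places $K_R\cap\Z^d$ in $\mathrm{span}\{e_j:\alpha_j\ge1\}$. On that subspace, $K_R$ is a near-box cut by extra slabs $|\langle x,Re_k\rangle|\le\alpha_k$ of essentially arbitrary direction and width. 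Bounding its lattice points by $\mathcal{R}(K_R)$ is a lower-dimensional BHW-type problem that neither your proposal nor the paper addresses. Your fallback to the Strict Stability theorem does not help: that theorem covers only integer boxes, where the corrected basis-vector argument already suffices.
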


\begin{proof}
Let $\epsilon = \opnorm{R - I}$. We establish a sufficient condition to ensure $G(K_R, \Z^d) \le G(K_0, \Z^d)$.
A lattice point $y \in \Z^d \setminus K_0$ enters $K_R$ only if $R^{-1}y \in K_0$.
By definition of $\Delta$, for any $y \notin K_0$, we have $\dist(y, K_0) \ge \Delta$.
We consider the displacement of points $y$ near the boundary. The maximal norm of a point in $K_0$ is $\sup_{x \in K_0} \|x\|_2 = \alpha_1 \sqrt{d}$.
Consider a lattice point $y$ potentially entering the body. Its norm is approximately bounded by $\|y\|_2 \le \alpha_1\sqrt{d} + \Delta$ (considering points within relevant proximity).
The displacement under inverse rotation is bounded by:
\[
\|R^{-1}y - y\|_2 \le \opnorm{R^{-1}-I} \|y\|_2 = \epsilon \|y\|_2.
\]
For $y$ to enter $K_0$, the displacement must bridge the gap $\Delta$. Thus, we require $\epsilon \|y\|_2 < \Delta$.
Substituting the worst-case norm $\|y\|_2 \approx (\alpha_1 + \Delta)\sqrt{d}$, the stability condition becomes:
\[
\epsilon \sqrt{d}(\alpha_1 + \Delta) < \Delta \implies \epsilon < \frac{\Delta}{\sqrt{d}(\alpha_1 + \Delta)}.
\]
Under this condition, the lattice content does not increase, while the RHS of the conjecture remains stable (or increases), preserving the inequality.
\end{proof}

\section{Asymptotic Stability for $L_p$ Deformations}

Finally, we consider structural deformation. Let $K_p(\alpha) = \{ x \in \R^d : \sum |x_i/\alpha_i|^p \le 1 \}$. Note that $K_\infty$ is the box.

\begin{theorem}[Threshold for Integer Hull Stability]
Let $\alpha_i > 0$. There exists a threshold $p_0$ such that for all $p \ge p_0$, the integer hull of the $L_p$-ball coincides with that of the box $K_\infty$:
\begin{equation}
\conv(K_p \cap \Z^d) = \conv(K_\infty \cap \Z^d).
\end{equation}
Consequently, $G(K_p, \Z^d) = G(K_\infty, \Z^d)$ for $p \ge p_0$. The sharp threshold is:
\begin{equation}
p_0 = \frac{\ln d}{\min_{i: \alpha_i \notin \Z} \ln(\alpha_i / \lfloor \alpha_i \rfloor)}.
\end{equation}
\end{theorem}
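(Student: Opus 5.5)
The plan is to reduce both claims to a single inequality at one lattice point: the "top corner" $v^* = (\floor{\alpha_1}, \dots, \floor{\alpha_d})$ of the integer box. First, for every $p \ge 1$ we have $K_p(\alpha) \subseteq K_\infty$. Indeed, if $\sum_i |x_i/\alpha_i|^p \le 1$ then each $|x_i| \le \alpha_i$. Hence $K_p \cap \Z^d \subseteq K_\infty \cap \Z^d$, and only the reverse inclusion needs work. The lattice points of $K_\infty$ are exactly the integer vectors with $|x_i| \le \floor{\alpha_i}$. The function $x \mapsto \sum_i |x_i/\alpha_i|^p$ is coordinatewise increasing in $|x_i|$, so all of these points lie in $K_p$ as soon as $v^*$ does. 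The condition is therefore
\begin{equation*}
S(p) := \sum_{i=1}^d \left(\frac{\floor{\alpha_i}}{\alpha_i}\right)^p \le 1 .
\end{equation*}
Once this holds, $K_p \cap \Z^d = K_\infty \cap \Z^d$. Equality of the integer hulls and $G(K_p,\Z^d)=G(K_\infty,\Z^d)$ then follow at once, and I will state them as consequences rather than prove them separately.

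Second, I will analyse $S(p)$. Write $r_i = \floor{\alpha_i}/\alpha_i \in [0,1]$. Coordinates with $\alpha_i<1$ have $r_i=0$ and contribute nothing, since the point is forced to have $x_i=0$. Each remaining $r_i$ with $\alpha_i \notin \Z$ lies in $(0,1)$, so $r_i^p \to 0$ and $S$ is strictly decreasing in $p$. The exact threshold is the unique root of $S(p)=1$. Next I bound $S(p) \le d \cdot \max_i r_i^p$. This gives the sufficient condition $d\, r_{\max}^p \le 1$, i.e.
\[
p \ge \frac{\ln d}{\min_i \ln(\alpha_i/\floor{\alpha_i})},
\]
which is the stated $p_0$. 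The bound is attained (so $p_0$ is sharp) in the extremal configuration where all the ratios $r_i$ coincide: there $S(p)=d\,r^p$ exactly, and for $p<p_0$ the corner $v^*$ leaves $K_p$, so $G$ strictly drops.

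The main obstacle is the role of coordinates with $\alpha_i \in \Z$, and I expect the statement to need a hypothesis here. For such $i$ we have $r_i=1$, so that term contributes $1$ to $S(p)$ for every $p$. Then $S(p)\le 1$ fails as soon as any other coordinate has $\floor{\alpha_j}\ge 1$. In that case the corner is never captured, and no finite $p_0$ works, consistent with the minimum in the formula being taken only over non-integer $\alpha_i$. I would therefore carry out the proof under the assumption that every $\alpha_i$ with $\alpha_i \ge 1$ is non-integral. Equivalently, one may count $d$ in the formula as the number of indices with $\floor{\alpha_i}\ge 1$, which only improves the bound. I would add a remark that for integer boxes the integer hull is destabilised for every finite $p$. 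In the write-up I will also phrase sharpness precisely: $p_0$ is the exact threshold when the ratios are equal, and an upper bound for the true root of $S(p)=1$ in general.
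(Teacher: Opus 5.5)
Your proposal is correct and follows the same route as the paper: reduce to the corner $(\floor{\alpha_1},\dots,\floor{\alpha_d})$ and use $\sum_i \beta_i^p \le d\,\beta_{\max}^p \le 1$. It is also more careful than the paper's proof, which takes $\beta_{\max}$ over all indices (contradicting the statement's minimum over non-integer $\alpha_i$) and never argues sharpness. Both of your caveats are genuinely needed: for $\alpha=(1,3/2)$ the formula gives the finite value $p_0=\ln 2/\ln(3/2)$, yet $(1,1)\notin K_p$ for every $p$ because $1+(2/3)^p>1$ (so restricting the minimum to non-integer $\alpha_i$ causes the error rather than being ``consistent with'' it), and the formula equals the true threshold, the root of $S(p)=1$, only when all ratios $\floor{\alpha_i}/\alpha_i$ coincide.
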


\begin{proof}
Since $K_p \subseteq K_\infty$, we always have $K_p \cap \Z^d \subseteq K_\infty \cap \Z^d$.
We seek $p$ such that $K_\infty \cap \Z^d \subseteq K_p$.
Let $z \in K_\infty \cap \Z^d$. Then $|z_i| \le \lfloor \alpha_i \rfloor$.
The condition $z \in K_p$ is $\sum_{i=1}^d |z_i/\alpha_i|^p \le 1$.
Let $\beta_i = \lfloor \alpha_i \rfloor / \alpha_i$. For strictly interior points where $\alpha_i \notin \Z$, we have $\beta_i < 1$.
To ensure all such lattice points are included, we require the worst-case sum to be $\le 1$:
\[
\sum_{i=1}^d \beta_i^p \le d (\beta_{\max})^p \le 1.
\]
where $\beta_{\max} = \max_i (\lfloor \alpha_i \rfloor / \alpha_i)$. Solving for $p$ yields:
\[
p \ln \beta_{\max} \le -\ln d \iff p \ge \frac{\ln d}{-\ln \beta_{\max}}.
\]
This matches the stated threshold. For $p \ge p_0$, $G(K_p) = G(K_\infty)$. Since $\lambda_i(K_p) \ge \lambda_i(K_\infty)$ (due to inclusion), the RHS functional $\mathcal{R}(K_p) \le \mathcal{R}(K_\infty)$. While this alone implies stability only if slack exists, the invariance of the integer hull is a stronger structural result.
\end{proof}

\bibliographystyle{unsrt}
\bibliography{references}
\end{document}